\documentclass[a4,12pt]{article}%

\usepackage{graphicx}
\DeclareGraphicsRule{.1}{mps}{.1}{}
\usepackage{graphics}

\usepackage{amsmath}
\usepackage{amsfonts}
\usepackage{amssymb}
\usepackage{enumerate}
\DeclareGraphicsRule{.1}{mps}{.1}{}

\setcounter{MaxMatrixCols}{30}
\newtheorem{theorem}{Theorem}[section]

\newtheorem{lemma}[theorem]{Lemma}

\newenvironment{proof}[1][Proof]{\textbf{#1.} }{\ \rule{0.5em}{0.5em}}

\newcommand{\ind}{{\rm fpi}}
\newcommand{\IND}{{\rm FPI}}

\newcommand{\dR}{{\bf R}}

\newcommand{\calE}{{\cal E}}

\newcounter{figurecounter}
\setcounter{figurecounter}{1}

\begin{document}

\title{Browder's Theorem with General Parameter Space%
\thanks{We thank John Yehuda Levy and Orin Munk for discussions that inspired this work.
The first author acknowledges the support of the Israel Science Foundation, Grant \#217/17.}}

\author{Eilon Solan and Omri N.~Solan%
\thanks{The School of Mathematical Sciences, Tel Aviv
University, Tel Aviv 6997800, Israel. e-mail: eilons@post.tau.ac.il, omrisola@post.tau.ac.il.}}

\maketitle

\begin{abstract}
Browder (1960) proved that for every continuous function $F : X \times Y \to Y$,
where $X$ is the unit interval and $Y$ is a nonempty, convex, and compact subset of $\dR^n$,
the set of fixed points of $F$, defined by $C_F := \{ (x,y) \in X \times Y \colon F(x,y)=y\}$
has a connected component whose projection to the first coordinate is $X$.
We extend this result to the case where $X$ is a connected and compact Hausdorff space.
\end{abstract}

\noindent
Keywords: Browder's Theorem, fixed points, connected component, index theory.

\bigskip

\noindent
MSC2010: 55M20. 

\section{Introduction}

Brouwer's Fixed Point Theorem states that every continuous function from a nonempty, convex, and compact subset of a Euclidean space into itself
has a fixed point.
Browder (1960) proved the following parametric extension of Brouwer's Fixed Point Theorem.

\begin{theorem}[Broweder, 1960]
\label{theorem:browder}
Let $X = [0,1]$, let $Y \subseteq \dR^n$ be nonempty, convex, and compact,
and let $F : X \times Y \to Y$ be a continuous function.
Define
the set of fixed points of $F$ by
\begin{equation}
\label{equ:98}
C_F := \{ (x,y) \in X \times Y \colon F(x,y)=y\}.
\end{equation}
Then $C_F$ has a connected component whose projection to the first coordinate is $X$.
\end{theorem}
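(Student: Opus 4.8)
The plan is to prove the equivalent statement that $C_F$ has a connected component meeting both end-slices $A := C_F \cap (\{0\}\times Y)$ and $B := C_F\cap(\{1\}\times Y)$. Indeed, for any connected component $K$ the image $\pi(K)$ under the first-coordinate projection $\pi$ is a connected, hence interval, subset of $[0,1]$; it equals $X$ precisely when it contains both $0$ and $1$, that is, when $K$ meets $A$ and $B$. First I would record the easy structural facts: $C_F$ is closed in the compact space $X\times Y$, hence compact; and for each fixed $x$ the map $F(x,\cdot):Y\to Y$ is a continuous self-map of a nonempty compact convex set, so by Brouwer's Fixed Point Theorem its fixed-point set, which is the fiber $\pi^{-1}(x)$, is nonempty. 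In particular $A$ and $B$ are nonempty, disjoint, and compact.

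I would argue by contradiction, assuming that no connected component of $C_F$ meets both $A$ and $B$. Here I invoke the standard fact that in a compact Hausdorff space connected components coincide with quasicomponents (intersections of clopen sets). This yields a separation lemma: if no component of $C_F$ meets both of the disjoint closed sets $A$ and $B$, then $C_F$ splits as a disjoint union $C_F = C_1\sqcup C_2$ of two clopen (hence compact) subsets with $A\subseteq C_1$ and $B\subseteq C_2$. Since $C_1,C_2$ are disjoint compact subsets of $X\times Y$, I can enclose them in disjoint open sets $U_1\supseteq C_1$ and $U_2\supseteq C_2$ of $X\times Y$, whose fibers $(U_i)_x := \{y : (x,y)\in U_i\}$ are open in $Y$.

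The engine of the contradiction is the fixed-point index. For each $x$ the fixed points of $F(x,\cdot)$ are exactly the $y$ with $(x,y)\in C_F = C_1\sqcup C_2$; those coming from $C_1$ lie in the open set $(U_1)_x$ and those from $C_2$ lie in the disjoint open set $(U_2)_x$, so no fixed point lies on the relative boundary $\partial(U_1)_x$ in $Y$. Hence the index $i(x) := \mathrm{ind}\big(F(x,\cdot),(U_1)_x\big)$ is well defined for every $x\in X$, and by the homotopy (parameter) invariance of the fixed-point index it is constant in $x$. At $x=0$ all fixed points of $F(0,\cdot)$ lie in $C_1\subseteq U_1$, so by additivity together with the normalization $\mathrm{ind}(F(0,\cdot),Y)=1$ (the Lefschetz number of a self-map of the contractible set $Y$) we get $i(0)=1$; at $x=1$ all fixed points lie in $C_2\subseteq U_2$, so $(U_1)_1$ contains none and $i(1)=0$. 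This contradicts constancy, proving the theorem.

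The main obstacle I anticipate is the careful deployment of index theory: verifying that the fibers $(U_1)_x$ form an admissible family to which parametric homotopy invariance applies (no fixed points crossing $\partial(U_1)_x$ as $x$ varies, which is exactly what the disjointness of $U_1,U_2$ buys us), and justifying the normalization $\mathrm{ind}(F(x,\cdot),Y)=1$ from the contractibility of the convex set $Y$. The separation lemma, while standard, also uses the compact Hausdorff hypothesis in an essential way, and it is precisely this step that will need to be revisited when $X=[0,1]$ is replaced by an arbitrary connected compact Hausdorff space.
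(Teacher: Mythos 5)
Your argument is correct, and it proves the theorem; but it is not the route the paper takes. What you have reconstructed is essentially Browder's original endpoint argument: separate $C_F$ into clopen pieces $C_1\supseteq A$, $C_2\supseteq B$ (using components $=$ quasicomponents in a compact Hausdorff space, exactly the Kuratowski fact the paper also cites), and then derive a contradiction from the generalized homotopy invariance of the fixed-point index, since $\mathrm{ind}\bigl(F_0,(U_1)_0\bigr)=1$ while $\mathrm{ind}\bigl(F_1,(U_1)_1\bigr)=0$. The one point you should nail down is the parametric invariance itself: the paper's three listed axioms (normalization, additivity, continuity over a \emph{fixed} compact $Z$) do not literally cover a family of varying domains $(U_1)_x$, so you either invoke the generalized homotopy property (valid here because $C_F\cap U_1=C_1$ is compact, which is exactly what the disjointness of $U_1,U_2$ buys) or patch it together locally by excision plus the continuity axiom and then use connectedness of $[0,1]$; the paper packages precisely this content into the locality property (I4) of its $\IND_*$. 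The paper instead deduces Theorem~\ref{theorem:browder} as a special case of Theorem~\ref{theorem:browder1}: it never privileges any two points of $X$, but shows via a finite clopen partition of $C_F$ and additivity that, since the total index is $1$, some connected component $D$ has nonzero index in every clopen neighborhood (Lemma~\ref{lemma:5}), and then that such a $D$ must project onto all of $X$, since missing some $x$ would force the index of a small clopen neighborhood of $D$ to vanish by locality (Lemma~\ref{lemma:6}). Your approach is shorter and more transparent for $X=[0,1]$; the paper's buys the generalization to arbitrary connected compact Hausdorff $X$. One small correction to your closing remark: the step that fails for general $X$ is not the separation lemma (which holds verbatim for any compact Hausdorff $C_F$), but the selection of the two distinguished slices at $0$ and $1$ and the one-parameter homotopy between them; there are no such distinguished points in a general connected compact Hausdorff space, which is exactly why the paper replaces the endpoint comparison with the partition-and-additivity argument. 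A minor technical point you share with the paper: if $Y$ has empty interior in $\dR^n$ one should work in its affine hull or fatten $Y$ to a neighborhood $\widehat Y$, as the paper does, before speaking of interiors and boundaries.
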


Theorem~\ref{theorem:browder} was used in a variety of topics,
like nonlinear complementarity theory (see, e.g., Eaves, 1971, or Allgower and Georg, 2012),
nonlinear elliptic boundary value problems
(Shaw, 1977),
the study of global continua of solutions of nonlinear
partial differential equations (see, e.g., Costa and Gon\c{c}alves, 1981, or Massabo and Pejsachowitz, 1984),
theoretical economics (Citanna et al., 2001),
and game theory (see, e.g., Herings and Peeters, 2010, or Solan and Solan, 2021).

A natural question is whether Browder's Theorem extends to the case that $X$ is not the unit interval,
but rather any nonempty, connected, and compact Hausdorff space.
Such an extension was in fact required by Munk and Solan (2020) in their study in game theory.

When $X$ admits a space-filling curve,
then the extension of Browder's Theorem follows easily from Theorem~\ref{theorem:browder}.
By the Hahn-Mazurkiewicz Theorem (see, e.g., Willard, 2012, Theorem 31.5),
a non-empty Hausdorff topological space admits a space-filling curve if and only if it is compact, connected, locally connected, and metrizable.
One example of a compact, connected, and metrizable space that
is neither locally connected nor does it admit a space-filling curve is
\[ V := \{ (x,y) \in [0,1] \times [-1,1] \colon x=0, \hbox{ or } x > 0 \hbox{ and } y = \sin(1/x)\}. \]

In this note we extend Browder's Theorem to the case where
$X$ is a connected and compact Hausdorff space,
and $Y$ is a nonempty, convex, and compact subset of a finite-dimensional real vector space.
Another type of extension of Browder's Theorem to the case
where $F$ is an upper hemi-continuous set-valued function with contractible values
(yet $X$ is still the unit interval) was provided by Mas-Colell (1974).

In addition to extending Browder's Theorem to a more general setup,
our proof ensures that the topological properties of the
connected component of $C_F$ whose projection to the first coordinate is $X$
are the same as those of $X$:
both are connected and compact Hausdorff spaces.
This allows to apply Browder's Theorem to the connected component whose existence is guaranteed by the theorem.
That is,
suppose that $X$ is a compact, connected, locally connected, and metrizable Hausdorff space, and $Y$ and $F$ are as in Theorem~\ref{theorem:browder}.
Let $D$ be a connected component of $C_F$ whose projection to the first coordinate is $X$,
and whose existence is guaranteed by Theorem~\ref{theorem:browder}.
Suppose further that $G : D \times Y' \to Y'$ is a continuous function,
where $Y'$ is a finite-dimensional real vector space.
Since $D$ is not necessarily locally connected,
to apply Browder's Theorem to $G$ we need a version of Theorem~\ref{theorem:browder} that holds
for spaces that are not necessarily locally connected.

\section{The Main Result}

The main result of this note is the following extension of Theorem~\ref{theorem:browder}.

\begin{theorem}
\label{theorem:browder1}
Let $X$ be a connected and compact Hausdorff space,
let $Y$ be nonempty, compact, and convex set in a finite-dimensional real vector space $V$,
and let $F : X \times Y \to Y$ be a continuous function.
Then $C_F$, the set of fixed point of $F$ that is defined in Eq.~\eqref{equ:98},
 has a connected component whose projection over the first coordinate is $X$.
\end{theorem}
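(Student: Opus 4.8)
The plan is to argue by contradiction, combining the fixed-point index with the fact that in a compact Hausdorff space the connected components coincide with the quasi-components. Throughout, observe that $C_F$ is a closed subset of the compact Hausdorff space $X\times Y$, hence is itself compact Hausdorff, and that the projection $\pi:C_F\to X$ is a closed map (because $Y$ is compact). Applying Brouwer's Fixed Point Theorem fibrewise to $F(x,\cdot):Y\to Y$ shows that the fibre $(C_F)_x:=\{y\in Y:F(x,y)=y\}$ is nonempty for every $x$, so $\pi$ is onto. Suppose, toward a contradiction, that no connected component of $C_F$ projects onto $X$.

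First I would record a separation tool: if $Z$ is compact Hausdorff, $D$ is the component of a point, and $B\subseteq Z$ is closed with $D\cap B=\emptyset$, then there is a clopen set $W\supseteq D$ with $W\cap B=\emptyset$. This holds because in a compact Hausdorff space $D$ is the intersection of all clopen sets containing the point; the family $\{W\cap B : W \text{ clopen},\ D\subseteq W\}$ consists of closed subsets of the compact set $B$ with the finite intersection property, so if every member were nonempty their intersection $D\cap B$ would be nonempty. Now fix $x^{*}\in X$. For each $y^{*}\in(C_F)_{x^{*}}$, the component $D$ of $(x^{*},y^{*})$ satisfies $\pi(D)\neq X$, so $D$ is disjoint from the fibre $B=C_F\cap(\{x'\}\times Y)$ for some $x'\notin\pi(D)$, and the tool yields a clopen $W_{y^{*}}\ni(x^{*},y^{*})$ with $\pi(W_{y^{*}})\neq X$. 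Since these clopen sets are open and cover the compact fibre $\{x^{*}\}\times(C_F)_{x^{*}}$, finitely many cover it; passing to differences I obtain pairwise disjoint clopen sets $W_1,\dots,W_k$ with $W:=\bigsqcup_j W_j\supseteq\{x^{*}\}\times(C_F)_{x^{*}}$ and $\pi(W_j)\neq X$ for each $j$.

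The crux is to attach to each clopen $W'\subseteq C_F$ a fibrewise index $i(W',x)$ and to prove it is locally constant in $x$. Since $Y$ is a compact convex subset of a finite-dimensional space it is a compact absolute retract, so the fixed-point index of $F(x,\cdot)$ over an open set $O\subseteq Y$ with no boundary fixed points is defined and enjoys additivity, homotopy invariance, and the normalization $i(C_F,x)=\Lambda(F(x,\cdot))=1$. For clopen $W'$, the set $(W')_x$ is a clopen, hence isolated, compact subset of $(C_F)_x$, so I set $i(W',x):=\mathrm{ind}(F(x,\cdot),O)$ for any open $O$ isolating $(W')_x$, additivity making this independent of $O$. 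To prove local constancy at $x_0$, I would pick $O\supseteq(W')_{x_0}$ with $\overline O$ disjoint from $(C_F)_{x_0}\setminus(W')_{x_0}$ and free of fixed points of $F(x_0,\cdot)$ on $\partial O$. Using that $\pi$ is closed and the fibres are compact, I find a neighborhood $N$ of $x_0$ such that for $x\in N$ every fixed point of $F(x,\cdot)$ lying in $\overline O$ belongs to $(W')_x$ and moreover $(W')_x\subseteq O$; then the straight-line homotopy $H_t=(1-t)F(x_0,\cdot)+tF(x,\cdot)$ stays in $Y$ by convexity, and after shrinking $N$ by compactness of $\partial O$ it has no fixed point on $\partial O$, so homotopy invariance gives $i(W',x)=\mathrm{ind}(F(x,\cdot),O)=\mathrm{ind}(F(x_0,\cdot),O)=i(W',x_0)$. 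Hence $x\mapsto i(W',x)$ is a locally constant integer-valued function on the connected space $X$, and therefore constant.

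Finally I would combine the two ingredients. For each $j$ choose $x_j\notin\pi(W_j)$; then $(W_j)_{x_j}=\emptyset$, so $i(W_j,x_j)=0$, and by constancy $i(W_j,\cdot)\equiv 0$. On the other hand $W\supseteq\{x^{*}\}\times(C_F)_{x^{*}}$ gives $(W)_{x^{*}}=(C_F)_{x^{*}}$, whence $i(W,x^{*})=1$ by normalization. Additivity then forces $1=i(W,x^{*})=\sum_{j}i(W_j,x^{*})=0$, a contradiction, so some connected component of $C_F$ must project onto $X$. I expect the main obstacle to be the local-constancy step: verifying, with no metric available on $X$ and using only compactness of the fibres together with closedness of $\pi$, that the fixed points cannot escape the isolating set $O$ as $x$ varies, so that homotopy invariance of the fixed-point index genuinely applies.
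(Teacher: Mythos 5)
Your proposal is correct, and it rests on exactly the two pillars of the paper's proof: the fact that in a compact Hausdorff space each connected component is the intersection of the clopen sets containing it (so a component that misses a fibre can be fattened to a clopen set that misses that fibre), and a fixed-point index attached to clopen subsets of $C_F$ that is additive, normalized, and independent of the parameter $x$. Your locally constant function $x \mapsto i(W',x)$ is precisely the paper's locality property (I4) for $\IND_*$, and your verification of it via isolating neighbourhoods, closedness of the projection, and the straight-line homotopy supplies the details that the paper compresses into ``standard arguments show.'' The one structural difference is how the final contradiction is organized. The paper proceeds in two steps: it first covers all of $C_F$ by clopen sets on which the index germ vanishes to extract an ``essential'' component (Lemma~\ref{lemma:5}), and then shows separately that an essential component must project onto $X$ (Lemma~\ref{lemma:6}). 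You instead fix a single $x^{*}$, cover the fibre $(C_F)_{x^{*}}$ by finitely many disjoint clopen sets, each missing some fibre and hence of index zero by constancy, and contradict the normalization $i(\cdot,x^{*})=1$ directly. Your version is a bit more economical (one contradiction instead of two nested ones); the paper's version isolates a reusable intermediate object, the component all of whose clopen neighbourhoods carry nonzero index, which is what actually gets shown to surject onto $X$. Both arguments are valid, and the technical core --- the local constancy of the fibrewise index, which you correctly identify as the main obstacle --- is handled adequately in your sketch.
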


We note that since $X$ and $Y$ are compact,
and since $F$ is continuous,
the set $C_F$ is compact.

For the proof we will need the concept of the fixed-point index, which we describe in Section~\ref{section:index}.
The proof of Theorem~\ref{theorem:browder1} appears in Section~\ref{section:proof}.
Though Browder's (1960) original proof uses the fixed-point index as well,
it relies on the fact that the unit interval has two extreme points,
and hence it is not clear whether it can be easily extended to any connected and compact Hausdorff space.

\subsection{The Fixed Point Index}
\label{section:index}

Roughly,
the fixed-point index is a mapping that counts with signs the number of fixed points of a continuous function in a given set.
Given a finite-dimensional real vector space $V$,
the fixed-point index is a function, denoted $\ind$, that assigns an integer to every
pair $(Z,f)$, where $Z \subseteq V$ is compact
and $f : Z \to V$ is a continuous function that has no fixed points on $\partial Z$, the boundary of $Z$.
The function $\ind$ satisfies the following properties:
\begin{itemize}
\item Normalization:
If $f$ is a constant function whose value is an element of the interior of $Z$,
then $\ind(Z,f) = 1$.
\item Additivity:
For every compact sets $Z_1,\dots,Z_K$ with disjoint interiors
and every continuous function $f : \bigcup_{k=1}^K Z_k \to V$ such that $f$ has no fixed points on $\bigcup_{k=1}^K \partial Z_k$,
we have
\[ \ind\left(\bigcup_{k=1}^K Z_k,f\right) = \sum_{k=1}^K \ind(Z_k,f). \]
\item   Continuity:
For every compact set $Z$,
the function $f \mapsto \ind(Z,f)$ is continuous over the set of continuous functions $f : Z \to V$ that do not have fixed points in $\partial Z$.
\end{itemize}

It follows from the additivity and continuity properties that if $f$ has no fixed points in $Z$, then $\ind(Z,f) = 0$.

For an exposition of the fixed-point index,
as well as proofs of the existence and uniqueness of the fixed-point index,
see, e.g., McLennan (2018, Chapter~13).

We will present a reformulation of the fixed-point index that is adapted for our purposes.
For every topological space $C$ denote by $\calE(C)$ the collection of all clopen subsets of $C$,
namely, all subsets that are both closed and open.
For every subset $Y$ of a finite-dimensional real vector space
and every function $f : Y \to Y$,
denote the set of fixed point of $f$ by
\[ C_f := \{ y \in Y \colon f(y) = y\} \subseteq Y. \]
For every two sets $X$ and $Y$, every function $F : X \times Y \to Y$,
and every $x \in X$, denote by $F_x : Y \to Y$ the function defined by
\[ F_x(y) := F(x,y), \ \ \ \forall y \in Y. \]


\begin{theorem}
Let $X$ be a connected and compact Hausdorff space,
and let $Y$ be nonempty, compact, and convex set in a finite-dimensional real vector space.
There exists (a) a function $\ind_*$ that assigns an integer
to each pair $(\widehat D,f)$, where $f : Y \to Y$ is continuous and $\widehat D \in \calE(C_f)$,
and
(b) a function $\IND_*$ that assigns an integer to each pair $(D,F)$, where $F : X \times Y \to Y$ is continuous and $D \in \calE(C_F)$,
such that the following properties hold
for every continuous functions $f : Y \to Y$ and $F : X \times Y \to Y$:
\begin{enumerate}
\item[I1)]   Normalization: $\ind_*(C_f,f) = 1$ and $\IND_*(C_F,F) = 1$.
\item[I2)]   Additivity: for every disjoint sets $(\widehat D_k)_{k=1}^K$ in $\calE(C_f)$,
\[ \ind_*\left(\bigcup_{k=1}^K \widehat D_k,f\right) = \sum_{k=1}^K \ind_*(\widehat D_k,f), \]
and for every disjoint sets $(D_k)_{k=1}^K$ in $\calE(C_F)$,
\[ \IND_*\left(\bigcup_{k=1}^K D_k,F\right) = \sum_{k=1}^K \IND_*(D_k,F). \]
\item[I3)]  Compatibility: for every compact set $Z$ of $Y$ such that $\partial Z \cap C_f = \emptyset$
\[ \ind_*(Z \cap C_f,f) = \ind(Z,f). \]
\item[I4)]   Locality: For every $E \in \calE(C_F)$,
\[ \IND_*(E,F) = \ind_*\bigl(E \cap (\{x\} \times Y),F_x\bigr), \ \ \ \forall x \in X. \]
\end{enumerate}
\end{theorem}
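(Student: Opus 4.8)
The plan is to construct $\ind_*$ and $\IND_*$ directly from the classical fixed-point index $\ind$, using the clopen structure of the fixed-point sets to localize the classical index. The key observation is that a clopen subset $\widehat D$ of the compact set $C_f$ is itself compact, and since $\widehat D$ and its complement $C_f \setminus \widehat D$ are disjoint compact sets, they can be separated by open sets in $V$. So first I would, given $\widehat D \in \calE(C_f)$, choose an open set $U \subseteq V$ with $\widehat D \subseteq U$ and $(C_f \setminus \widehat D) \cap \overline{U} = \emptyset$, take a compact set $Z$ (for instance $\overline{U} \cap Y$, or a suitable regular neighborhood) satisfying $\widehat D \subseteq Z$, $Z \cap C_f = \widehat D$, and $\partial Z \cap C_f = \emptyset$, and then \emph{define}
\[ \ind_*(\widehat D, f) := \ind(Z,f). \]
The first thing to verify is that this is well-defined, i.e.\ independent of the choice of $Z$: if $Z_1, Z_2$ are two such compact sets, then by additivity applied to a common refinement (or by excising the region between them, where $f$ has no fixed points) together with the vanishing property $\ind(Z,f)=0$ when $Z \cap C_f = \emptyset$, one gets $\ind(Z_1,f) = \ind(Z_2,f)$. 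With well-definedness in hand, I3 (Compatibility) is essentially immediate from the definition applied to the given $Z$, Normalization I1 follows by taking $\widehat D = C_f$ and $Z = Y$ (using that $\partial Y \cap C_f = \emptyset$ after shrinking, and that $\ind(Y,f)=1$ for a self-map of a compact convex set by Brouwer-type normalization), and Additivity I2 follows from the classical additivity property since disjoint clopen pieces can be enclosed in compact sets with disjoint interiors.

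The construction of $\IND_*$ is the heart of the matter, and property I4 (Locality) is what I expect to be the main obstacle. The idea is to use the parametric index to \emph{define} $\IND_*(D,F)$ as the common value of $\ind_*$ along slices, but this requires showing that the slice index $x \mapsto \ind_*\bigl(D \cap (\{x\}\times Y), F_x\bigr)$ is constant on $X$. I would prove this by a connectedness argument: since $X$ is connected, it suffices to show this integer-valued function is locally constant. Given $x_0 \in X$, the slice $D \cap (\{x_0\}\times Y)$ is a clopen subset of $C_{F_{x_0}}$; I would enclose it in a compact $Z \subseteq Y$ with $\partial Z \cap C_{F_{x_0}} = \emptyset$, so that $F_{x_0}$ has no fixed points on $\partial Z$. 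By compactness of $X \times \partial Z$ and continuity of $F$, the map $F_x$ has no fixed points on $\partial Z$ for all $x$ in a neighborhood $N$ of $x_0$. The Continuity property of $\ind$ then gives that $x \mapsto \ind(Z, F_x)$ is continuous, hence locally constant, on $N$. The delicate point is that $Z$ was tailored to $x_0$, so for nearby $x$ the set $D \cap (\{x\}\times Y)$ may not equal $Z \cap C_{F_x}$; here I must verify that, after possibly shrinking $N$, the clopen slice of $D$ at $x$ is indeed captured by $Z$ (i.e.\ $Z \cap C_{F_x}$ equals the $x$-slice of $D$), which uses the openness of $D$ in $C_F$ together with compactness to control how the fixed-point set moves with $x$.

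Having established that the slice index is locally constant and hence (by connectedness of $X$) constant, I would \emph{define} $\IND_*(D,F)$ to be this common value, so that I4 holds by construction. Then I1 for $\IND_*$ reduces to I1 for $\ind_*$ evaluated at any single slice $C_{F_x}$ (whose index is $1$), and I2 for $\IND_*$ reduces to I2 for $\ind_*$ slice by slice, since the slicing operation $D \mapsto D \cap (\{x\}\times Y)$ respects disjoint unions. Throughout, the recurring technical tool is that clopen subsets of the compact fixed-point sets can be separated and enclosed in well-chosen compact neighborhoods with boundary disjoint from the relevant fixed-point set, so that the classical index applies; the main genuinely new work beyond routine separation arguments is the uniform control of how fixed-point sets vary with the parameter $x$, needed to move the enclosing set $Z$ consistently across a neighborhood in $X$.
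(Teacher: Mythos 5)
Your proposal is correct and follows essentially the same route as the paper: define $\ind_*(\widehat D,f) := \ind(Z,f)$ for a compact $Z$ with $Z \cap C_f = \widehat D$ and $\partial Z \cap C_f = \emptyset$, and obtain $\IND_*$ from the slice index, shown constant in $x$ by a local-constancy-plus-connectedness argument; the paper compresses all of this into ``standard arguments,'' so your treatment of the locality step (controlling $Z \cap C_{F_x}$ for $x$ near $x_0$ via compactness of $D$ and $C_F \setminus D$) is precisely the content it omits. One small correction: when $C_f$ meets $\partial Y$ you must \emph{enlarge} $Y$ to a convex compact $\widehat Y$ containing it in its interior and replace $f$ by $f \circ \pi_Y$ (as the paper does), rather than ``shrink,'' since otherwise no compact $Z \subseteq Y$ enclosing such a $\widehat D$ can satisfy $\partial Z \cap C_f = \emptyset$.
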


Note that while the fixed-point index is defined for pairs $(\widehat D,f)$ such that $f$ has no fixed point on $\partial \widehat D$,
the function $\ind_*$ is defined for pairs $(\widehat D,f)$ where all points in $\widehat D$ are fixed points of $f$.

\bigskip

\begin{proof}
We first argue that we can assume w.l.o.g.~that $f$ has no fixed points on $\partial Y$.
Indeed, let $\widehat Y \subset V$ be convex and compact set whose interior contains $Y$.
Given a function $f : Y \to Y$, define a function $\widehat f : \widehat Y \to Y$ by
\[ \widehat f(y) := f(\pi_Y(y)), \ \ \ \forall y \in Y, \]
where $\pi_Y : \widehat Y \to Y$ is the projection.
For every continuous function $f : Y \to Y$,
the function $\widehat f$ is continuous and has no fixed points in $\widehat Y \setminus Y$.
We can then study the fixed-point index when the underlying space is $\widehat Y$ rather than $Y$.

For every clopen set $D \subseteq C_f$, let $Z_D \subseteq \widehat Y$ be a compact set that satisfies
(a) $Z_D \supset D$,
(b) $\partial Z_D \cap \partial D = \emptyset$
and
(c) $Z_D \cap C_f = D$.
Such a set exists since $D$ is clopen and $C_f$ is compact.
It follows that $\ind(Z_D, f)$ is well defined.
Set
\[ \ind_*(D,f) := \ind(Z_D, f). \]
Standard arguments show that this definition is independent of the choice of $Z_D$,
and that (I1)--(I4) hold by the properties of the fixed-point index.
\end{proof}

\bigskip

One consequence of the definition is that if $D \in \calE(C_F)$, and if $D \cap (\{x\} \times Y) = \emptyset$,
then $\IND_*(D,F) = 0$.
Indeed, by (I4) we have $\IND_*(D,F) = \ind_*(\emptyset,F_x)$,
and by (I2) the latter is equal to 0.

\subsection{The Proof of Theorem~\ref{theorem:browder1}}
\label{section:proof}




Theorem~\ref{theorem:browder1} follows from the following two lemmas.

\begin{lemma}
\label{lemma:5}
There exists a connected component $D \subseteq C_F$ that satisfies the following property:
for every $E \in \calE(C_F)$ that contains $D$
there exists $E' \in \calE(C_F)$
that satisfies $D \subseteq E' \subseteq E$ and $\IND_*(E',F) \neq 0$.
\end{lemma}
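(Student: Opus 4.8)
The plan is to argue by contradiction: I assume that the asserted property fails for \emph{every} connected component of $C_F$, and derive that $\IND_*(C_F,F)=0$, contradicting the normalization property (I1). The organizing notion will be that of a \emph{null} clopen set: call $G\in\calE(C_F)$ null if $\IND_*(G',F)=0$ for every $G'\in\calE(C_F)$ with $G'\subseteq G$. Note that a clopen subset of a null set is again null, and that it suffices to exhibit finitely many null clopen sets covering $C_F$, since additivity (I2) would then force $\IND_*(C_F,F)$ to be a sum of zeros.

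Two ingredients feed into this. The first is the standard fact that, since $C_F$ is compact Hausdorff, its connected components coincide with its quasi-components: each component $D$ is the intersection of all clopen sets containing it, and consequently every $G\in\calE(C_F)$ meets $D$ in either $\emptyset$ or all of $D$ (because $D\cap G$ is clopen in the connected set $D$). I would record this at the outset, either by citing the component/quasi-component equality for compact Hausdorff spaces or by reproving it from normality in a few lines. The second, and decisive, ingredient is the following claim: \emph{if the property fails for a component $D$, then $D$ has a null clopen neighborhood.} Failure means there is a clopen $E\supseteq D$ such that every clopen $E'$ with $D\subseteq E'\subseteq E$ has $\IND_*(E',F)=0$; I claim this very $E$ is null. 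Given an arbitrary clopen $G'\subseteq E$, the dichotomy from the first ingredient gives either $D\subseteq G'$ or $D\cap G'=\emptyset$, i.e. $D\subseteq E\setminus G'$. In the first case $\IND_*(G',F)=0$ directly; in the second, $E\setminus G'$ is a clopen set containing $D$ and contained in $E$, so $\IND_*(E\setminus G',F)=0$, and since also $\IND_*(E,F)=0$ (take $E'=E$), additivity (I2) applied to $E=G'\sqcup(E\setminus G')$ yields $\IND_*(G',F)=0$. Hence $E$ is null.

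With the claim in hand the proof concludes quickly. Under the contradiction hypothesis every point of $C_F$ lies in a component, which lies in a null clopen neighborhood, so these null clopen sets form an open cover of the compact space $C_F$; I extract a finite subcover $E_1,\dots,E_m$. Disjointifying in the Boolean algebra $\calE(C_F)$ — setting $F_k:=E_k\setminus\bigcup_{j<k}E_j$ — produces a finite partition of $C_F$ into pairwise disjoint clopen sets, each a subset of some $E_k$ and therefore null, hence of index $0$; additivity (I2) then gives $\IND_*(C_F,F)=\sum_k\IND_*(F_k,F)=0$, contradicting (I1). I expect the main obstacle to be the key claim of the second paragraph: the failure hypothesis only constrains clopen sets that \emph{contain} $D$, whereas nullity must be verified for \emph{all} clopen subsets of $E$, including those disjoint from $D$. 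The resolution is exactly the connectedness dichotomy together with the passage to the complement $E\setminus G'$, which converts a subset disjoint from $D$ into a superset of $D$ to which the hypothesis applies.
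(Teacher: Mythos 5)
Your proposal is correct and follows essentially the same route as the paper: assume every component fails, observe via the connectedness dichotomy that the witnessing clopen neighborhood $E_D$ has vanishing index on \emph{all} its clopen subsets (passing to the complement $E_D\setminus G'$ for subsets disjoint from $D$), extract a finite subcover, disjointify, and contradict (I1) via (I2). Your write-up of the complementation step is in fact cleaner than the paper's, whose displayed equation has the roles of $\widehat E$ and $E_D\setminus\widehat E$ transposed.
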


\begin{lemma}
\label{lemma:6}
Let $D \subseteq C_F$ be a connected component that satisfies the condition of Lemma~\ref{lemma:5}.
The projection of $D$ to the first coordinate is $X$.
\end{lemma}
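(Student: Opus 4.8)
The plan is to argue by contradiction, exploiting the classical fact that in a compact Hausdorff space the connected components coincide with the quasi-components, together with the locality property (I4) of $\IND_*$ and its stated consequence.

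Let $\pi : C_F \to X$ denote the projection onto the first coordinate, and suppose toward a contradiction that $\pi(D) \neq X$. Then there is a point $x_0 \in X$ with $x_0 \notin \pi(D)$, which means that the set $K := C_F \cap (\{x_0\}\times Y)$ is disjoint from $D$; note that $K$ is compact, being the intersection of the compact set $C_F$ with the compact slice $\{x_0\}\times Y$. The first and key step is to produce a single clopen set $E \in \calE(C_F)$ with $D \subseteq E$ and $E \cap K = \emptyset$. For this I would invoke the fact that in the compact Hausdorff space $C_F$ the connected component $D$ equals its quasi-component, so that $D$ is exactly the intersection of all clopen subsets of $C_F$ that contain it. Consequently, for each $k \in K$ there is a clopen $E_k \supseteq D$ with $k \notin E_k$; the complements $C_F \setminus E_k$ then form an open cover of the compact set $K$, so finitely many of them, indexed by $k_1,\dots,k_m$, already cover $K$. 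The finite intersection $E := \bigcap_{i=1}^m E_{k_i}$ is clopen, contains $D$, and satisfies $E \cap K = \emptyset$.

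With this $E$ in hand, I would apply Lemma~\ref{lemma:5}: since $E \in \calE(C_F)$ contains $D$, there exists $E' \in \calE(C_F)$ with $D \subseteq E' \subseteq E$ and $\IND_*(E',F) \neq 0$. On the other hand, $E' \subseteq E$ is disjoint from $\{x_0\}\times Y$, so $E' \cap (\{x_0\}\times Y) = \emptyset$, and the consequence of the locality property recorded after the previous theorem yields $\IND_*(E',F) = 0$. This contradicts $\IND_*(E',F) \neq 0$, and therefore $\pi(D) = X$, as required.

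The only genuinely nontrivial ingredient is the separation of $D$ from the slice $K$ by a clopen set, and the crux there is the component-equals-quasi-component theorem for compact Hausdorff spaces; once $D$ is realized as an intersection of clopen sets, the passage to a single separating clopen set is a routine compactness argument, and the remainder is a direct application of Lemma~\ref{lemma:5} and property (I4).
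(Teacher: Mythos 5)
Your proof is correct and follows essentially the same route as the paper: both realize $D$ as the intersection of all clopen subsets of $C_F$ containing it, extract by compactness a single clopen $E \supseteq D$ disjoint from the slice $\{x_0\}\times Y$, and then derive a contradiction between Lemma~\ref{lemma:5} and the vanishing of $\IND_*$ on clopen sets missing that slice. The only cosmetic difference is that you phrase the compactness step as a finite subcover of $K$ by complements, while the paper uses the finite intersection property directly.
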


\begin{proof}[Proof of Lemma~\ref{lemma:5}]
Assume to the contrary that the claim does not hold.
Then for every connected component $D$ of $C_F$ there exists $E_D \in \calE(C_F)$ such that
$D \subseteq E_D$ and for every $E' \in \calE(C_F)$ that satisfies $D \subseteq E' \subseteq E_D$ we have $\IND_*(E',F) = 0$.
It follows that $\IND(E_D,F) = 0$, and hence
 $\IND(\widehat E,F) = 0$ for every clopen subset $\widehat E$ of $E_D$,
whether or not $\widehat E$ contains $D$.
Indeed,
if $\widehat E$ does not contain $D$.
then it is disjoint of $D$, hence
by (I2) we have
\[ \IND_*(E_D \setminus \widehat E,F) = \IND_*(E_D,F) - \IND_*(\widehat E,F) = 0 - 0 = 0. \]

Since $C_F$ is compact and $(E_D)_D$ is an open cover of $C_F$,
there is a finite collection of its connected components, denoted $(D_k)_{k=1}^K$,
such that $(E_{D_k})_{k=1}$ covers $C_F$.

The set $C_F$ can be presented as a disjoint union
\[ C_F = \bigsqcup_{k=1}^K \left( E_{D_k} \setminus \left(\bigcup_{j < k} E_{D_j}\right)\right). \]
Since the sets $(E_{D_k})_{k=1}^K$ are clopen, so is the set $E_{D_k} \setminus \left(\bigcup_{j < k} E_{D_j}\right)$, for each $k$, $1 \leq k \leq K$.
Since $E_{D_k} \setminus \left(\bigcup_{j < k} E_{D_j}\right) \subseteq E_{D_k}$,
it follows from the definition of $E_{D_k}$ that
$\ind_*\left(E_{D_k} \setminus \left(\bigcup_{j < k} E_{D_j}\right),F\right) = 0$.
By (I1) and (I2) we have
\[ 1 = \IND_*(C_F,F)
= \sum_{k=1}^K \IND_*\left( E_{D_k} \setminus \left(\bigcup_{j < k} E_{D_j}\right), F \right) = 0,
\]
a contradiction.
\end{proof}

\bigskip

\begin{proof}[Proof of Lemma~\ref{lemma:6}]
Assume to the contrary that the claim does not hold.
Then there exists $x \in X$ that is not in the projection of $D$ to the first coordinate.
It follows that $(\{x\} \times Y) \cap D = \emptyset$.

The set $D$ is the intersection of all clopen sets that contain it
(see, e.g., Kuratowski, 1968, Theorem 2 on Page 169).
Denoting by $(E_\alpha)_\alpha$ all the clopen sets in $\calE(C_F)$ that contain $D$,
we have
\[ (\{x\} \times Y) \cap \left(\bigcap_\alpha E_\alpha\right) = \emptyset. \]
Since all the sets in this intersection are compact,
 a finite intersection of these sets is already empty:
 there are $(\alpha_k)_{k=1}^K$ such that
\begin{equation}
\label{equ:2}
(\{x\} \times Y) \cap \left(\bigcap_{k=1}^K E_{\alpha_k}\right) = \emptyset.
\end{equation}
Set $D^* := \bigcap_{k=1}^K E_{\alpha_k}$.
As the intersection of finitely many clopen sets, $D^* \in \calE(C_F)$.
Eq.~\eqref{equ:2} implies that $IND_*(D',F) = 0$,
for every $D' \subseteq D^*$ such that $D' \in \calE(C_F)$.
This contradicts the choice of $D$.
Indeed, we selected $D$ to satisfy Lemma~\ref{lemma:5},
yet the set $E = D^*$ does not satisfy the conclusion of the lemma.
\end{proof}

\end{document}